\newtheorem{lemma}{Lemma}
\newtheorem{observation}{Observation}
\newtheorem{theorem}{Theorem}
\newtheorem{corollary}{Corollary}
\newtheorem{conjecture}{Conjecture}
\newtheorem{notation}{Notation}
\theoremstyle{definition}
\newtheorem{definition}{Definition}
\begin{document}
 
\title{Note on the union-closed sets conjecture and Reimer's average set size theorem}
\author{Kengbo Lu\thanks{kengbo.lu@cooper.edu} , Abigail Raz\thanks{abigail.raz@cooper.edu}}
\maketitle
\begin{abstract}
The Union-Closed Sets Conjecture, often attributed to Péter Frankl in 1979, remains an open problem in discrete mathematics. %(extremal combinatorics).  
It posits that for any finite family of sets $\mathcal{S}\neq\{\emptyset\}$, if the union of any two sets in the family is also in the family, then \textit{there must exist an element that belongs to at least half of the member sets}. We will refer to the italicized portion as the abundance condition. In 2001, David Reimer proved that the average set size of a union-closed family $\mathcal{S}$ must be at least $\frac{1}{2}\log_{2}|\mathcal{S}|$. When proving this result, he showed that a family being union-closed implies that the family satisfies certain conditions, which we will refer to as the \textit{Reimer's conditions}. Therefore, as seen in the context of Tim Gowers' polymath project on the Union-Closed Sets Conjecture, it is natural to ask if all families that satisfy Reimer's conditions meet the abundance condition. A minimal counterexample to this question was offered by Raz in 2017. In this paper, we will discuss a general method to construct infinitely many such counterexamples with any fixed lower bound on the size of the member sets. Furthermore, we will discuss some properties related to these counterexamples, especially those focusing on how far these counterexamples are from being union-closed.
\end{abstract}
\section{Introduction}
The Union-Closed Sets Conjecture, stated below, often attributed to Péter Frankl in 1979, remains an open problem in extremal combinatorics.
\begin{conjecture}
    For any finite family of sets $\mathcal{S}\neq\{\emptyset\}$, if the union of any two sets in the family is also in the family, then there must exist an element that belongs to at least half of the sets. 
\end{conjecture}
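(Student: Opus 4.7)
The statement is Frankl's Union-Closed Sets Conjecture itself, which remains open; so any ``proof proposal'' is really a sketch of the most promising partial attack rather than a complete plan. My approach would follow the entropy method introduced by Gilmer in 2022, which is currently the only known route to a universal constant lower bound on the abundance fraction. Fix a union-closed family $\mathcal{S}$, draw $A,B$ independently and uniformly from $\mathcal{S}$, and note that union-closure forces $A\cup B\in\mathcal{S}$, so $H(A\cup B)\le\log_2|\mathcal{S}|=H(A)$. The goal is to assume, for contradiction, that every element $x$ of the ground set has frequency $p_x:=\Pr[x\in A]<1/2$ and derive $H(A\cup B)>H(A)$, yielding the conclusion.

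The main technical step is a chain-rule decomposition of $H(A)$ and $H(A\cup B)$ coordinate by coordinate over the ground set. One isolates the pointwise inequality that bounds the conditional entropy difference
\[ H(\mathbf{1}_{x\in A\cup B}\mid\text{prefix})\;-\;H(\mathbf{1}_{x\in A}\mid\text{prefix}) \]
in terms of the conditional probability $q$ that $x\in A$ given the prefix, using the identity $\Pr[x\in A\cup B\mid\text{prefix}]=1-(1-q)^2$. Then one solves the resulting one-variable optimization, exploiting concavity of the binary entropy $h$ and the concavity of $q\mapsto h(1-(1-q)^2)-h(q)$ on $[0,1/2)$, to show that the aggregated gain across coordinates is strictly positive under the assumption $p_x<1/2$ for all $x$. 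Summing the chain-rule terms then produces the desired contradiction.

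The hard part, and where every published attempt has stalled, is that treating coordinates as approximately independent only yields the bound $(3-\sqrt{5})/2\approx 0.38$ rather than $1/2$. Closing this gap seems to require exploiting higher-order correlations among coordinates of $A$, or a genuinely new global invariant beyond entropy. I would expect my concrete contribution to be at most a modest numerical improvement via a more refined coupling, not a full resolution. Moreover, as the present paper demonstrates by constructing counterexamples to the Reimer-conditions implication, any proof cannot proceed by Reimer-style averaging alone: the abundance condition genuinely captures strictly more than what union-closure encodes at the level of set-size averages, so a successful argument must use union-closure in a way that these counterexamples fail to mimic. I would therefore be candid in the writeup that the $1/2$ barrier appears to require a fundamentally new idea rather than an optimization of existing machinery.
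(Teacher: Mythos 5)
You correctly identify that this statement is Frankl's Union-Closed Sets Conjecture itself, which is open; the paper does not prove it and does not claim to. It is stated as Conjecture 1 purely as background, so there is no proof in the paper to compare against. Your sketch of the Gilmer-style entropy attack is a fair summary of the current state of the art, and you accurately report the $(3-\sqrt{5})/2\approx 0.38$ barrier and the fact that the paper's actual contribution is orthogonal: it shows that Reimer-style averaging conditions alone cannot yield the abundance bound, by constructing infinitely many families satisfying Reimer's conditions but failing abundance. One small clarification worth making in your writeup: the paper's counterexamples rule out one specific averaging-based strengthening (Conjecture 2), not averaging arguments in general, so the conclusion you should draw is the narrower one that any proof must use union-closure beyond what Reimer's conditions encode, which is exactly how the paper frames it. Beyond that, your honest assessment that a full resolution requires a genuinely new idea is appropriate, and there is no gap to flag since no complete proof is being claimed by you or by the paper.
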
 There are also multiple equivalent graph theoretic formulations and a lattice formulation for this conjecture \cite{Bruhn_Charbit_Telle_2013} \cite{Rival_1985}. 
An important early partial result was provided by Knill in 1994, who proved that in any non-trivial union-closed family with $n$ member sets, there exists an element that appears in at least $\frac{n-1}{\log_{2} n}$ of the sets \cite{knill1994graph}. Since then, many other partial results have been proven. For example, the conjecture holds for families with singletons or doubletons \cite{sarvate}, and it also holds for families whose largest member set size is no more than 11 \cite{Bošnjak_Marković_2008}. 
In 2022, Gilmer provided the first constant lower bound - there exists an element that appears in at least $0.01 n$ sets in every union-closed family \cite{gilmer2022constant}. Shortly after, multiple groups improve the constant lower bound to around 0.38 \cite{pebody2022extension,alweiss2022improved,sawin2023improved,yulei}. The best constant lower bound currently is approximately 0.38271, proven by Liu \cite{liu2023improving} by building upon the arguments of Yu \cite{yulei}, Sawin \cite{sawin2023improved}, and Gilmer\cite{gilmer2022constant}.%

Gilmer's groundbreaking constant lower bound is in some sense derived with an averaging argument together with the use of entropy. Prior to this, many partial results were derived from using averaging arguments, most notably including Reimer's result that the average set size of a union-closed family $\mathcal{S}$ must be at least $\frac{1}{2}\log_{2}|\mathcal{S}|$ \cite{reimer_2003}. To do this, Reimer introduced a set of necessary conditions, given in Definition \ref{Reimerdef}, and showed that any family satisfying these conditions has average set size at least $\frac{1}{2}\log_{2}|\mathcal{S}|$. In this paper we further comment on families satisfying Definition \ref{Reimerdef}.
Below we introduce some necessary notation and definitions.
\begin{notation} 
\textbf{ }
\begin{enumerate}
    \item $[n]:=\{1,\ldots,n\}$.
    \item Given $A\subseteq B \subseteq [n]$, $[A,B]\coloneq\{C:A\subseteq C \subseteq B\}$.
    
    \item Let $cl(\mathcal{S})$ denote the union closure of the family $\mathcal{S}$;  that is, $A\in cl(\mathcal{S})$ if and only if there exists some subset $\mathcal{T}\subseteq \mathcal{S}$ such that $A = \bigcup_{B \in \mathcal{T}}B$. 
\end{enumerate}
\end{notation}
\begin{definition}
\textbf{ }
    \begin{enumerate}
        \item A family of sets satisfies the \textit{abundance condition} if there exists an element that belongs to at least half of the sets. 
        \item Let $\mathcal{P}([n])$ be the power set of $[n]$; that is, $\mathcal{P}([n])$ is the set of all subsets of $[n]$, then $\mathcal{F}\subseteq\mathcal{P}([n])$ 
    is a \textit{filter} if $i\in[n]$ and $F\in\mathcal{F}$ implies $F\cup \{i\}\in\mathcal{F}$.
    \end{enumerate}
\end{definition}
\begin{definition}\label{Reimerdef}
$\mathcal{S}\subseteq\mathcal{P}([n])$ satisfies \textit{Reimer's conditions} if there exists a filter $\mathcal{F}\subseteq\mathcal{P}([n])$ and a bijection from $\mathcal{S}$ to $\mathcal{F}$, $A\rightarrow F_{A}$ such that:
\begin{enumerate}
    \item $A\subseteq F_{A}$ (subset condition);
    \item if $A,B\in \mathcal{S}$ and $ A\neq B$ then  $[A,F_{A}]\cap[B,F_{B}]=\emptyset$ (non-interference condition). 
\end{enumerate}
\end{definition}
\begin{lemma}
\label{reimerlemma}
\textup{(Lemma 1.3 in \cite{reimer_2003}) }If $\mathcal{S}$ is a non-trivial union-closed family of sets, then it satisfies Reimer's conditions.
\end{lemma}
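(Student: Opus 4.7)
My plan is to prove the lemma by induction on the non-negative integer $\Phi(\mathcal{S}) := \sum_{A \in \mathcal{S}}(n - |A|)$, reducing a union-closed family to a filter through repeated up-compressions. The base case is when $\mathcal{S}$ is itself a filter: then taking $F_A := A$ satisfies both of Reimer's conditions, since the singletons $[A,A] = \{A\}$ are automatically pairwise disjoint.

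For the inductive step, assume $\mathcal{S}$ is union-closed but not a filter, so there exist $A \in \mathcal{S}$ and $i \in [n] \setminus A$ with $A \cup \{i\} \notin \mathcal{S}$. Define the up-compression toward $i$ by
\[
U_i(A) := \begin{cases} A \cup \{i\} & \text{if } i \notin A \text{ and } A \cup \{i\} \notin \mathcal{S}, \\ A & \text{otherwise,}\end{cases}
\]
and set $\mathcal{S}' := \{U_i(A) : A \in \mathcal{S}\}$. A short case analysis on whether $i \in A$, $i \in B$, and whether $A \cup B \cup \{i\} \in \mathcal{S}$ will show that $U_i$ restricts to a bijection $\mathcal{S} \to \mathcal{S}'$, that $\mathcal{S}'$ is again union-closed, and that $\Phi$ strictly decreases (so the induction terminates). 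By induction, there is a bijection $\phi' : \mathcal{S}' \to \mathcal{F}$ onto a filter satisfying Reimer's conditions, and setting $F_A := \phi'(U_i(A))$ yields the required bijection $\mathcal{S} \to \mathcal{F}$ with $A \subseteq U_i(A) \subseteq \phi'(U_i(A)) = F_A$, verifying the subset condition.

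The main obstacle is verifying the non-interference condition after the compression. Suppose $C \in [A, F_A] \cap [B, F_B]$ for distinct $A, B \in \mathcal{S}$; the task is to exhibit distinct $X, Y \in \mathcal{S}'$ and a common set $C^{\ast} \in [X, \phi'(X)] \cap [Y, \phi'(Y)]$, contradicting the inductive non-interference in $\mathcal{S}'$. If $U_i(A) \subseteq C$ and $U_i(B) \subseteq C$, take $(X, Y, C^{\ast}) = (U_i(A), U_i(B), C)$ directly. Otherwise, say $U_i(A) = A \cup \{i\}$ and $i \notin C$; if moreover $U_i(B) = B \cup \{i\}$, then $C^{\ast} = C \cup \{i\}$ works. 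The delicate subcase is when $U_i(B) = B$, which forces $B \cup \{i\} \in \mathcal{S}$; then $A \cup B \cup \{i\} \in \mathcal{S}$ by union-closedness of $\mathcal{S}$, and if $B \subseteq A$ we would obtain $A \cup \{i\} = A \cup (B \cup \{i\}) \in \mathcal{S}$, contradicting the shift on $A$. Hence $B \not\subseteq A$, and the auxiliary element $D := A \cup B \cup \{i\}$ is a third distinct member of $\mathcal{S}'$. A careful comparison of the intervals attached to $A \cup \{i\}$, $B$, and $D$ in $\mathcal{S}'$ at the candidate $C \cup \{i\}$ should then produce the required contradiction. I expect this last subcase to be the technical heart of the proof, since exhibiting the right witness uses the filter property of $\mathcal{F}$ in an essential way.
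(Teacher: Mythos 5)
The paper itself gives no proof of Lemma~\ref{reimerlemma}; it simply cites Lemma~1.3 of Reimer's paper. Your up-compression (shifting) induction is, in essence, the approach Reimer takes, so your route is the ``intended'' one. The base case, the bijectivity of $U_i$, the union-closedness of $\mathcal{S}'$, the strict decrease of $\Phi$, the subset condition, and the first several cases of the non-interference check are all correct as you describe them.

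The one place you have left genuinely unfinished is exactly the subcase you flagged as delicate, and the specific candidate you name there does not quite work. With $U_i(A)=A\cup\{i\}$, $U_i(B)=B$, $i\notin C$, and $D:=A\cup B\cup\{i\}$, the witness should be $D$ itself rather than $C\cup\{i\}$: you cannot certify $C\cup\{i\}\subseteq\phi'(D)$ (nor $C\cup\{i\}\subseteq F_B$, since $i$ may fail to lie in $F_B$). Instead, observe that $A\cup B\subseteq C\subseteq F_A$ and $i\in F_A$ (because $A\cup\{i\}=U_i(A)\subseteq F_A$), hence $D\subseteq F_A$ and also $A\cup\{i\}\subseteq D$, so $D\in[A\cup\{i\},F_A]$; trivially $D\in[D,\phi'(D)]$. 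Since $D\ne A\cup\{i\}$ (else $B\subseteq A$, which you already ruled out), this contradicts the non-interference of the boxes of $\mathcal{S}'$ at the two distinct members $A\cup\{i\}$ and $D$. Note that this closing step uses only the union-closedness of $\mathcal{S}$ and the inductive disjointness of boxes in $\mathcal{S}'$ — the filter property of $\mathcal{F}$ is not actually needed here, contrary to your expectation; it is simply inherited unchanged from the inductive hypothesis.
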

%\noindent Reimer proved in Lemma \ref{reimerlemma}
%that a union-closed family of sets automatically satisfies Reimer's conditions \cite{reimer_2003}.
As a part of Tim Gowers' polymath project on the union-closed sets conjecture in 2016, the following conjecture, a strengthening of the union-closed sets conjecture, was posed \cite{Gowers}.
\begin{conjecture}
\label{polymathcon}
    Any family that satisfies Reimer's conditions satisfies the abundance condition.
\end{conjecture}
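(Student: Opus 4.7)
The plan is to first note that any filter $\mathcal{F}$ automatically satisfies the abundance condition: for each $i \in [n]$, the map $F \mapsto F \cup \{i\}$ sends $\{F \in \mathcal{F} : i \notin F\}$ injectively into $\{F \in \mathcal{F} : i \in F\}$ by the filter property, so every element lies in at least $|\mathcal{F}|/2 = |\mathcal{S}|/2$ members of $\mathcal{F}$. Thus every element is a candidate, and the conjecture reduces to transferring this abundance across the bijection $A \mapsto F_A$. The subset condition gives one direction for free: if $i \in A$ then $i \in F_A$. My task is to control, for some well-chosen $i$, the set of $A \in \mathcal{S}$ with $i \in F_A$ but $i \notin A$.

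I would first rule out the cheap attempt: averaging the abundance inequality over all elements $i$ would require the average size of a member of $\mathcal{S}$ to be at least $n/2$, whereas Reimer's theorem only guarantees $\frac{1}{2}\log_2|\mathcal{S}|$, which is far smaller. So I would instead try to exploit the non-interference condition. If $i \notin A$ but $i \in F_A$, then $A \cup \{i\} \in [A,F_A]$; for every other $B \in \mathcal{S}$, non-interference forces $A \cup \{i\} \notin [B,F_B]$, and in particular $B \neq A \cup \{i\}$. Hence for each $i$, the operation \emph{add $i$} sends $\{A \in \mathcal{S}: i \notin A,\, i \in F_A\}$ injectively \emph{outside} $\mathcal{S}$. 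Combined with the filter's own abundance, I would hope to set up an averaging (possibly weighted by $|F_A \setminus A|$) showing that some $i$ remains abundant in $\mathcal{S}$ itself.

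The main obstacle I anticipate is that Reimer's conditions are purely a combinatorial pair of constraints, namely disjoint intervals plus a matching to a filter, and carry no algebraic union-closure information. The abundance condition concerns element frequencies in $\mathcal{S}$ specifically, not in $\mathcal{F}$ or in the intervals, and there is no structural reason the 'cost' of a pair $(A,F_A)$ picking up element $i$ should be small in aggregate. I expect the averaging sketched above to stall precisely because a cleverly engineered $\mathcal{S}$ can let the filter $\mathcal{F}$ absorb all the abundance without passing it down. Given the abstract's remark that counterexamples exist and can be constructed in infinite families, I would run a parallel search for small instances, building on ground sets of size roughly $5$ to $8$ a filter whose abundant elements lie in exactly $|\mathcal{F}|/2$ sets, and trying to realize a bijection to an $\mathcal{S}$ in which every element is used in fewer than half the members; I would expect this search to succeed before any direct proof of the conjecture does.
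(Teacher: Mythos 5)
Your proposal correctly lands on the right answer: this conjecture is false. The paper confirms this, citing Raz's 2017 counterexample with $n=8$ and $|\mathcal{S}|=11$, and then spends its remaining sections constructing an infinite parametric family of further counterexamples. Your diagnostic reasoning is also sound: the filter $\mathcal{F}$ trivially satisfies abundance (your injective map $F \mapsto F\cup\{i\}$ argument is the standard one), nothing in the subset or non-interference conditions forces that abundance to descend to $\mathcal{S}$, and your observation that if $i \in F_A\setminus A$ then $A\cup\{i\}\notin\mathcal{S}$ (since otherwise $A\cup\{i\}$ would lie in both $[A,F_A]$ and $[A\cup\{i\},F_{A\cup\{i\}}]$) is correct. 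Your guess that a small example on a ground set of size roughly $5$ to $8$ should exist is in the right ballpark; in fact Raz showed $n=8$ is minimal.

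However, the proposal ends with a plan, not a result. To settle a false universally-quantified conjecture you must exhibit the counterexample, and that is precisely where the paper's content lives: the structural observations pinning down what any counterexample under the chosen filter shape must look like (notably Observation \ref{minsizeobs}, that the minimum set size $x$ forces $n \geq 4x+4$, and the observation that at $n=4x+4$ every element must appear in exactly $\frac{n}{2}+1$ of the $n+3$ member sets), followed by the explicit construction in Section \ref{construction} and a verification that it satisfies both the subset and non-interference conditions while every element lies in fewer than half the sets. None of that work is foreshadowed or substituted for in your proposal. The gap, concretely, is the missing construction: a well-argued expectation that a search will succeed is not a disproof, and the difficulty in this problem is not deciding to search but engineering a family that threads the non-interference constraints (which are quite restrictive --- they force, e.g., $2\in S_1$, $1\in S_2$, $4\in S_3$, $3\in S_4$) without any element becoming abundant.
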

Raz disproved this conjecture by constructing one such counterexample; that is, a family $\mathcal{S}$ that satisfies Reimer's conditions but not the abundance condition \cite{raz}. In her counterexample, $n=8$ and $|\mathcal{S}|=11$. She also proved that there is no counterexample with $n< 8$ or $|\mathcal{S}|<11$. 

\section{Observations}
This section includes observations on some necessary properties for a family to be a counterexample of Conjecture \ref{polymathcon}. Observations 1, 2, 4, 5 are proven in \cite{raz}.
\begin{observation}
    \normalfont{(Note 6 in \cite{raz}) }We may assume $\mathcal{F}$ must contain all sets in $\mathcal{P}([n])$ of size at least $n-1$.
\end{observation}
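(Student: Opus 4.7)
The plan is to show that from any counterexample $(\mathcal{S},\mathcal{F})$ to Conjecture \ref{polymathcon} one can construct a counterexample whose filter contains every subset of $[n]$ of size at least $n-1$. First I would dispose of the set $[n]$ itself: since $\mathcal{S}\neq\emptyset$, the filter $\mathcal{F}$ is nonempty, and iteratively applying the filter property to any member of $\mathcal{F}$ yields $[n]\in\mathcal{F}$ automatically.

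The bulk of the argument is to handle the $(n-1)$-subsets. Suppose $[n]\setminus\{i\}\notin\mathcal{F}$ for some $i\in[n]$. The first step is the structural observation that by upward closure, every $F\in\mathcal{F}$ must then contain $i$: if some $F\in\mathcal{F}$ were contained in $[n]\setminus\{i\}$, then closing upward would put $[n]\setminus\{i\}$ into $\mathcal{F}$, contradicting our assumption. Let $B^*$ denote the unique element of $\mathcal{S}$ with $F_{B^*}=[n]$.

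The strategy is to augment the bijection by the pair $([n]\setminus\{i\},[n]\setminus\{i\})$. The subset condition is immediate, and for non-interference, the singleton interval $\{[n]\setminus\{i\}\}$ could only conflict with an existing $[B,F_B]$ when $F_B\supseteq[n]\setminus\{i\}$; this forces $F_B=[n]$, i.e.\ $B=B^*$, and additionally $B^*\subseteq[n]\setminus\{i\}$, i.e.\ $i\notin B^*$. When $i\in B^*$ no conflict arises and the extension succeeds directly. Otherwise, I would also split the interval $[B^*,[n]]$ as $[B^*,[n]\setminus\{i\}]\sqcup[B^*\cup\{i\},[n]]$, remapping $F_{B^*}:=[n]\setminus\{i\}$ and introducing a new pair $(B^*\cup\{i\},[n])$; since the union of the two new intervals equals the old, non-interference with every other $[B,F_B]$ is inherited from the original decomposition.

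The main obstacle will be verifying that each modification preserves the failure of the abundance condition. Each modification enlarges $\mathcal{S}$ by one set: either $[n]\setminus\{i\}$ (raising the count of every element except $i$ by one) or $B^*\cup\{i\}$ (raising the count of $i$ and of each element of $B^*$ by one). Since $|\mathcal{S}|$ grows by one, I would track how close each element was to half of $|\mathcal{S}|$ and argue that when $|\mathcal{S}|$ is even the extra count is absorbed immediately, while when $|\mathcal{S}|$ is odd a tighter accounting, possibly combined with iterating the procedure over all missing $(n-1)$-subsets simultaneously, or pairing modifications so as to cancel parity effects, shows that no element reaches half of the enlarged family. Iterating over all $i$ with $[n]\setminus\{i\}\notin\mathcal{F}$ then yields the desired counterexample.
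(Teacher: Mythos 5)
Your approach is genuinely different from the paper's (Note 6 in Raz), and it has a real gap. The standard argument goes in the opposite direction: it projects \emph{down} rather than augmenting \emph{up}. If $[n]\setminus\{i\}\notin\mathcal{F}$ then, as you observe, every $F\in\mathcal{F}$ contains $i$. Now pass to $\mathcal{S}' = \{A\setminus\{i\}:A\in\mathcal{S}\}$ and $\mathcal{F}' = \{F\setminus\{i\}:F\in\mathcal{F}\}$, viewed on the ground set $[n]\setminus\{i\}$. Non-interference forces $A\mapsto A\setminus\{i\}$ to be injective on $\mathcal{S}$: if $A=B\cup\{i\}$ with $i\notin B$, then $B\subseteq A\subseteq F_B$ (as $i\in F_B$), so $A\in[A,F_A]\cap[B,F_B]$, a contradiction. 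Thus $|\mathcal{S}'|=|\mathcal{S}|$. One checks the subset and non-interference conditions are inherited (given $C$ in two projected intervals, $C\cup\{i\}$ lies in the two original ones), and the frequency of every $j\neq i$ is unchanged. So $\mathcal{S}'$ is a counterexample on a strictly smaller ground set with the same number of member sets, and a minimal counterexample must already have all $(n-1)$-element sets in $\mathcal{F}$.

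Your augmentation route runs into exactly the obstacle you flag, and you do not resolve it. When $|\mathcal{S}|=m$ is odd---which is the situation throughout this paper, where $|\mathcal{S}|=n+3$ with $n$ even, and $|\mathcal{S}|=11$ in Raz's minimal example---an element with count $(m-1)/2$ satisfies the failure of abundance before the modification, but after you add one set to $\mathcal{S}$ and increment that element's count it sits at exactly $(m+1)/2$, which is precisely half of the enlarged family. The suggestion to ``pair modifications to cancel parity effects'' is not an argument: each modification adds exactly one member set, so the parity of $|\mathcal{S}|$ flips at each step, and you have no control over which elements gain a count (in the $i\notin B^*$ branch it is $i$ together with all of $B^*$; in the $i\in B^*$ branch it is everything except $i$). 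Your bijection and non-interference analysis is correct, and your observation that $[n]\in\mathcal{F}$ is automatic is fine; the gap is entirely in the abundance bookkeeping, which the downward projection sidesteps because it changes neither $|\mathcal{S}|$ nor any frequency of $j\neq i$.
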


\begin{observation}
    \normalfont{(Fact 5 in \cite{raz}) }If  $\mathcal{S}$ satisfies the Reimer's conditions and every member of $\mathcal{F}$ has size at least $n-1$, then there exists an element satisfying the abundance condition.
\end{observation}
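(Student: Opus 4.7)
The plan is to reduce to the tight case allowed by Observation~1, then use the non-interference condition to extract a combinatorial pairing constraint, and finally close with a straightforward double counting.

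First, I would apply Observation~1 to assume $\mathcal{F}$ contains every subset of $[n]$ of size $n-1$ or $n$. Combined with the hypothesis that every member of $\mathcal{F}$ has size at least $n-1$, this pins down $\mathcal{F}=\{[n]\}\cup\{[n]\setminus\{i\}:i\in[n]\}$, so $|\mathcal{S}|=|\mathcal{F}|=n+1$. I would then label the preimages under the bijection: write $A_0$ for the set with $F_{A_0}=[n]$ and, for each $i\in[n]$, write $A_i$ for the set with $F_{A_i}=[n]\setminus\{i\}$. The subset condition immediately forces $i\notin A_i$ for all $i\geq 1$.

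The key step is translating non-interference into explicit constraints. For distinct $i,j\in[n]$ the intervals $[A_i,[n]\setminus\{i\}]$ and $[A_j,[n]\setminus\{j\}]$ are disjoint iff $A_i\cup A_j\not\subseteq[n]\setminus\{i,j\}$, which, since $i\notin A_i$ and $j\notin A_j$, is equivalent to
\[
i\in A_j \quad\text{or}\quad j\in A_i.
\]
Similarly, the intervals $[A_0,[n]]$ and $[A_i,[n]\setminus\{i\}]$ are disjoint iff $i\in A_0$, so $A_0=[n]$.

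Finally I would count. For each $i\in[n]$ let $d_i$ denote the number of members of $\mathcal{S}$ containing $i$. Since $A_0=[n]$ contributes to every $d_i$ and since the pairing constraint says that for each of the $\binom{n}{2}$ unordered pairs $\{i,j\}$ at least one of the ordered incidences $i\in A_j$, $j\in A_i$ holds, I get
\[
\sum_{i=1}^{n} d_i \;=\; n \;+\; \bigl|\{(i,j): i\neq j,\ i\in A_j\}\bigr| \;\geq\; n+\binom{n}{2}\;=\;\frac{n(n+1)}{2}.
\]
Averaging over the $n$ elements yields some $i$ with $d_i\geq (n+1)/2=|\mathcal{S}|/2$, which is the abundance condition.

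The only real obstacle is step two, deriving the pairwise condition $i\in A_j$ or $j\in A_i$ from non-interference; once this is in hand, the counting is automatic, and the fact that $|\mathcal{S}|=n+1$ makes the arithmetic line up exactly. I do not expect any issue with the edge cases, since the forced equality $A_0=[n]$ provides precisely the extra $n$ incidences needed to turn the $\binom{n}{2}$ lower bound into the bound $|\mathcal{S}|/2$.
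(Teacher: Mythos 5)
Your argument is correct and follows exactly the line of reasoning the paper deploys elsewhere (compare the non-interference reformulation in Lemma~2 and the double-counting in the proof of Observation~\ref{minsizeobs}); the paper itself cites Raz's Fact~5 without reproducing the proof, but your translation of non-interference into ``$i\in A_j$ or $j\in A_i$'' followed by the incidence count $\sum_i d_i \geq n + \binom{n}{2}$ is the expected argument. One small point worth flagging: Observation~1 is a ``may assume'' reduction stated for the purpose of hunting counterexamples, not a blanket structural fact about every family satisfying the hypothesis, so invoking it to force $\mathcal{F}=\{[n]\}\cup\{[n]\setminus\{i\}:i\in[n]\}$ requires either spelling out that reduction or, more cleanly, just running your count over $T=\{i:[n]\setminus\{i\}\in\mathcal{F}\}$ instead of over all of $[n]$ — the non-interference constraint $T\subseteq A_0$ together with the pairwise condition on $T$ gives $\sum_{i\in T}d_i\geq |T|+\binom{|T|}{2}$, and averaging over $T$ yields an element in at least $(|T|+1)/2=|\mathcal{S}|/2$ sets, with no appeal to Observation~1 needed.
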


\begin{observation}
\label{unimapobs}
    The set in $\mathcal{S}$ that is mapped to $[n] \in \mathcal{F}$ must be $[n]$ itself.
\end{observation}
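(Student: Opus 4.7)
The plan is to argue by contradiction using Observation 1 in tandem with the non-interference condition of Definition \ref{Reimerdef}. Suppose some $A \in \mathcal{S}$ satisfies $F_A = [n]$ yet $A \neq [n]$. Then $A \subsetneq [n]$, so I may fix an index $j \in [n] \setminus A$. The goal is to exhibit a single set that lies in both $[A, F_A]$ and $[B, F_B]$ for some $B \in \mathcal{S}$ with $B \neq A$, which will violate the non-interference condition.

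To produce such a $B$, I invoke Observation 1, which lets us assume the filter $\mathcal{F}$ contains every subset of $[n]$ of size at least $n-1$; in particular $[n] \setminus \{j\} \in \mathcal{F}$. Since $A \mapsto F_A$ is a bijection onto $\mathcal{F}$ and $F_A = [n]$, there is a unique $B \in \mathcal{S}$ with $F_B = [n] \setminus \{j\}$, and $B \neq A$. I then take the candidate witness $C = [n] \setminus \{j\}$: the choice $j \notin A$ yields $A \subseteq C \subseteq [n] = F_A$, so $C \in [A, F_A]$, while the subset condition of Definition \ref{Reimerdef} gives $B \subseteq F_B = C$, so $C \in [B, F_B]$. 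This is the desired contradiction, and we conclude $A = [n]$.

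I do not anticipate a real obstacle: the argument is short and rests entirely on the availability of Observation 1 together with the two defining properties of Reimer's conditions. The one subtle point worth flagging is that without Observation 1 the filter $\mathcal{F}$ need not contain any proper superset of $A$ other than $[n]$ (a filter in the sense of Definition 1 is merely upward-closed, and could in principle equal $\{[n]\}$), so the witness $C$ would not be guaranteed to exist. This is precisely why Observation 3 is stated after Observation 1: the latter is what supplies the near-maximal set in $\mathcal{F}$ that drives the contradiction.
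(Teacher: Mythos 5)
Your proof is correct and follows essentially the same argument as the paper: both fix an element missing from $A$, invoke Observation 1 to locate the corresponding $(n-1)$-element set $F_B$ in $\mathcal{F}$, and observe that $F_B$ itself lies in both intervals $[A,[n]]$ and $[B,F_B]$, violating non-interference. Your remark that Observation 1 is what guarantees the existence of the witness is a nice clarification of a dependence the paper leaves implicit.
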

\begin{proof}
    Assume a set $A\in\mathcal{P}([n])$, $A\neq[n]$ is mapped to $[n]$. There must be an element $k\in[n]$ such that $k\notin A$. Let $F_{B}$ be the set in $\mathcal{F}$ of size $n-1$ such that $k\notin F_{B}$. Then $F_{B}\in[A,[n]]\cap[B,F_{B}]$, contradicting the non-interference condition.
\end{proof}
\begin{observation}
\label{extra sets observation}
    If $n$ is even, then we need at least 2 sets of size less than $n-1$
    in $\mathcal{F}$ to possibly construct a counterexample to Conjecture \ref{polymathcon}. If $n$ is odd, we need at least 3 sets of size less than $n-1$. 
\end{observation}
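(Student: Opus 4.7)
The plan is to encode $\mathcal{S}$ as a directed graph on $[n]$ and derive a counting contradiction in each small-$k$ case, where $k$ denotes the number of sets in $\mathcal{F}$ of size less than $n-1$. First I would invoke Observations 1 and \ref{unimapobs} to assume $\mathcal{F}$ contains every subset of $[n]$ of size at least $n-1$ and that $[n]\in\mathcal{S}$ is the preimage of $[n]\in\mathcal{F}$, writing $A_i\in\mathcal{S}$ for the preimage of $F_i:=[n]\setminus\{i\}$, so $i\notin A_i$. Since $\mathcal{F}$ is a filter, any element of $\mathcal{F}$ of size at most $n-3$ forces its three size-$(n-2)$ supersets into $\mathcal{F}$, producing at least four sets of size less than $n-1$; hence for $k\leq 2$ each ``extra'' has size exactly $n-2$, i.e., is of the form $[n]\setminus\{a,b\}$.

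Next I would define a digraph $G$ on $[n]$ by $i\to j$ (for $i\neq j$) iff $j\notin A_i$. The key structural claim, to be deduced from non-interference, is that $G$ has no $2$-cycles: if both $j\notin A_i$ and $i\notin A_j$ then $A_i\cup A_j\subseteq [n]\setminus\{i,j\}=F_i\cap F_j$, which places $A_i\cup A_j$ in $[A_i,F_i]\cap[A_j,F_j]$. This would give
\[
\sum_{j\in[n]}\mathrm{indeg}(j) \;=\; \sum_{i\in[n]}\bigl(n-1-|A_i|\bigr) \;\leq\; \binom{n}{2}.
\]
An analogous interval-overlap check pitting each extra $F_e=[n]\setminus\{a,b\}$ against $F_a$ and $F_b$ would force $b\in A_a$ and $a\in A_b$, so that the pair $\{a,b\}$ contributes no arrow to $G$; as distinct size-$(n-2)$ extras correspond to distinct pairs, the bound would sharpen to $\sum_j\mathrm{indeg}(j)\leq\binom{n}{2}-k$.

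On the counting side, for each element $j$, $d(j)=n-\mathrm{indeg}(j)+e_j$, where $e_j$ is the number of extras whose $\mathcal{S}$-preimage contains $j$. Assuming the abundance condition fails, $d(j)<(n+1+k)/2$ for every $j$, which gives $\mathrm{indeg}(j)\geq\lceil(n-k)/2\rceil+e_j$. Summing and combining with $\sum_je_j\geq 0$ and the upper bound $\binom{n}{2}-k$ yields a strict arithmetic impossibility exactly when $n\lceil(n-k)/2\rceil>n(n-1)/2-k$; a short parity check confirms this holds precisely for $n$ even with $k\leq 1$ and for $n$ odd with $k\leq 2$, proving the observation.

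I expect the main obstacle to be the extra-interval bookkeeping: I must verify that each of the $k$ size-$(n-2)$ extras removes a full and \emph{distinct} unit from the arrow budget of $G$. Without distinctness the total loss would drop to $1$ and the $n$ odd, $k=2$ case would no longer close. A secondary subtlety will be handling the parity of $n+1+k$ carefully in the lower bound on $\mathrm{indeg}(j)$, since the even/odd threshold in the statement emerges as exactly the single unit saved by the ceiling.
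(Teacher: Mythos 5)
Your argument is correct and complete. The paper itself does not prove this observation (it is cited to~\cite{raz}), so there is no in-paper proof to compare against line by line; but your digraph formulation is exactly equivalent to the pair-counting argument the paper does use in its proof of Observation~\ref{minsizeobs}: there, for each unordered pair $\{p,q\}\subseteq[n]$ the non-interference condition forces $q\in S_p$ or $p\in S_q$, which is precisely your ``no $2$-cycle'' claim, giving $\sum_{p\in[n]}|S_p|\geq\binom{n}{2}$, the complementary form of your $\sum_j\mathrm{indeg}(j)\leq\binom{n}{2}$. Your additional step of testing each extra $[n]\setminus\{a,b\}$ against $F_a$ and $F_b$ to force $b\in A_a$ and $a\in A_b$ is likewise the same move the paper makes in Observation~\ref{minsizeobs} to gain the ``$+2$'' term. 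The counting bound $d(j)=n-\mathrm{indeg}(j)+e_j<(n+1+k)/2$, the ceiling manipulation to $\mathrm{indeg}(j)\geq\lceil(n-k)/2\rceil+e_j$, and the filter argument that for $k\leq 2$ every extra has size exactly $n-2$ are all sound, and the final inequality check does isolate exactly the cases $n$ even, $k\leq 1$ and $n$ odd, $k\leq 2$.

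One small remark on your stated worry: the distinctness of the $k$ pairs is automatic (distinct sets $[n]\setminus\{a,b\}$ determine distinct pairs $\{a,b\}$), and more to the point it is not actually load-bearing. For $n$ odd and $k=2$, even a single unit of savings already gives $n(n-1)/2 > n(n-1)/2-1$, so the case closes with $s=1$; what you genuinely cannot afford there is $s=0$, which the argument never produces. So the bookkeeping you flagged as the main obstacle is in fact unproblematic.
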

\noindent In the following discussion, we will focus on the smaller case in which $n$ is even. By the non-interference condition and Observation \ref{extra sets observation}, there must be at least two sets of size $n-2$ in $\mathcal{F}$. We further restrict to only consider families where the two necessary sets of size $n-2$ in $\mathcal{F}$ are the only sets in $\mathcal{F}$ that are of size less than $n-1$. Moreover, we assume that the elements that are missing in these two sets are disjoint. These assumptions specify a specific form of filter $\mathcal{F}=\{F_0,F_1,F_2,\ldots,F_n,F_{n+1},F_{n+2}\}$, where $F_0=[n]$, $F_1=[n]\setminus\{1\}$, $F_2=[n]\setminus\{2\}$, $\ldots$, $F_n=[n]\setminus\{n\}$, $F_{n+1}=[n]\setminus\{1,2\}$, $F_{n+2}=[n]\setminus\{3,4\}$. Let $\mathcal{S}=\{S_0,S_1,\ldots,S_{n+2}\}$, where $S_i$ is mapped to $F_i$ for $i=0,1,\ldots,n+2$. 
\begin{observation}
    \normalfont{(Note 8 \cite{raz}) }$S_{n+1}\neq\emptyset$ and $S_{n+2}\neq\emptyset$ 
\end{observation}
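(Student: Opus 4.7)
The plan is to prove each non-emptiness by contradiction, converting the non-interference condition into a counting bound that clashes with the assumed failure of abundance.

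I would first suppose $S_{n+1} = \emptyset$. Then $[S_{n+1}, F_{n+1}] = \mathcal{P}([n]\setminus\{1,2\})$, and non-interference applied to this large interval forces, for every $j \neq n+1$, the minimum of $[S_j, F_j]$---namely $S_j$ itself---to meet $\{1,2\}$. Combined with the subset condition this pins down $2 \in S_1$ (as $1 \notin F_1$) and $1 \in S_2$, while Observation \ref{unimapobs} gives $S_0 = [n]$. I would then invoke the counterexample hypothesis: since $|\mathcal{S}| = n+3$ and $n$ is even, violating abundance means every element of $[n]$ lies in at most $(n+2)/2$ of the $S_j$. Letting $a$ and $b$ count the indices $i \in \{3,\ldots,n,n+2\}$ with $1 \in S_i$ and $2 \in S_i$ respectively, one obtains $a, b \leq (n-2)/2$ after subtracting the two guaranteed appearances of each element within $\{S_0, S_1, S_2\}$. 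On the other hand, the membership constraints above force each of the $n-1$ indices in $\{3,\ldots,n,n+2\}$ to contribute to $a$ or $b$, so $a + b \geq n - 1$, contradicting $a + b \leq n - 2$.

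The case $S_{n+2} = \emptyset$ follows by the symmetric argument, with $\{3,4\}$ in place of $\{1,2\}$ and the correspondingly relabeled indices. I do not anticipate a serious obstacle here; the only delicate point is the parity bookkeeping driven by $|\mathcal{S}| = n+3$ being odd, which is precisely what makes the abundance-violation bound tight enough to conflict with the union lower bound by a single unit.
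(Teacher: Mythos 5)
Your argument is correct: assuming $S_{n+1}=\emptyset$, non-interference with the interval $\mathcal{P}([n]\setminus\{1,2\})$ forces every other $S_p$ to meet $\{1,2\}$, and the resulting double-count $a+b\geq n-1$ collides with the cap $a+b\leq n-2$ coming from failure of abundance, with the $S_{n+2}$ case symmetric. The paper does not reproduce a proof here (it cites Note~8 of \cite{raz}), but this is the natural counting argument for the observation and there is no gap in your reasoning.
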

Before the next observation, we provide an equivalent formulation for the non-interference condition. This formulation is particularly important in the proof of Observation \ref{minsizeobs}.

\begin{lemma}
    Let $p$ and $q$ be arbitrary elements in ${0,1,\ldots,n+2}$. $[S_p, F_p]\cap[S_q, F_q]=\emptyset$ 
    if and only if there exists $i\in [n]$ such that at least one of the following is true:
    \begin{enumerate}
        \item $i\in S_p$ and $i\notin F_q$ 
        \item $i\in S_q$ and $i\notin F_p$
    \end{enumerate}
    In the case of both $p,q\in [n]$, this is the same as saying at least one of $q\in S_p$ and $p\in S_q$ is true.
\end{lemma}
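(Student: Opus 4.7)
The plan is to reduce the statement about interval intersection to a pair of containment relations, then use the subset condition to clean things up, and finally specialize the filter structure for $p,q\in[n]$.

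First I would observe that $[S_p,F_p]\cap[S_q,F_q]$ is nonempty if and only if there is some set $C$ satisfying $S_p\cup S_q \subseteq C \subseteq F_p\cap F_q$. Such a $C$ exists if and only if $S_p\cup S_q \subseteq F_p\cap F_q$. Since the subset condition in Definition \ref{Reimerdef} already guarantees $S_p\subseteq F_p$ and $S_q\subseteq F_q$, this reduces to the two inclusions $S_p\subseteq F_q$ and $S_q\subseteq F_p$ holding simultaneously.

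Next I would negate. The intervals are disjoint exactly when at least one of those inclusions fails, i.e.\ when either there exists $i\in S_p$ with $i\notin F_q$, or there exists $i\in S_q$ with $i\notin F_p$. That is the desired equivalence in the general statement, so both directions will fall out of this one chain of equivalences.

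For the final sentence, I would specialize to $p,q\in[n]$, where by construction $F_p=[n]\setminus\{p\}$ and $F_q=[n]\setminus\{q\}$. Then $i\notin F_q$ forces $i=q$, so condition (1) becomes simply $q\in S_p$; symmetrically condition (2) becomes $p\in S_q$. Thus the disjointness is equivalent to $q\in S_p$ or $p\in S_q$.

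I do not anticipate a real obstacle here: the whole statement is essentially unpacking definitions, and the only care needed is to invoke the subset condition at the right moment so that the two-sided containment $S_p\cup S_q\subseteq F_p\cap F_q$ collapses to the two non-trivial inclusions $S_p\subseteq F_q$ and $S_q\subseteq F_p$. The specialization at the end is then an immediate consequence of the fact that $F_p$ and $F_q$ each omit exactly one element of $[n]$.
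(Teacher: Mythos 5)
Your proof is correct and follows essentially the same approach as the paper: both reduce disjointness of the intervals to the failure of the containments $S_p\subseteq F_q$ and $S_q\subseteq F_p$, using $S_p\cup S_q$ as the witness element when the intervals do intersect. You present it as a single chain of equivalences rather than two separate directions (the paper uses a contrapositive for one direction), and you explicitly verify the final specialization to $p,q\in[n]$, which the paper leaves implicit, but these are stylistic rather than substantive differences.
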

\begin{proof}
First we show the forward direction using a contrapositive argument. For the sake of contradiction, assume that neither of the two statements are true. This implies $S_p \subseteq F_q$ and $S_q \subseteq F_p$, thus $S_p \cup S_q \in [S_p, F_p]\cap[S_q, F_q]$, so $[S_p, F_p]\cap[S_q, F_q]\neq\emptyset$. 

For the other direction, assume without loss of generality that there is some $i$ such that $i\in S_p$, and $i\notin F_q$. 
Then for all $P\in [S_p, F_p]$, we have $i\in P$. For all $Q\in [S_q, F_q]$, we have $i\notin Q$. Therefore $[S_p, F_p]\cap[S_q, F_q]=\emptyset$.
\end{proof}

\begin{observation}
\label{minsizeobs}
    Let $x$ be the smallest set size in $\mathcal{S}$, then $n\geq 4x+4$.
\end{observation}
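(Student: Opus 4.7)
The plan is to combine a global upper bound on $\sum_{i\in[n]}d_i$ coming from the assumed failure of the abundance condition with a global lower bound on $\sum_{j\in[n]}|S_j|$ coming from non-interference; here $d_i$ denotes the number of sets in $\mathcal{S}$ that contain the element $i$. The key trick is to sharpen the naive pairwise lower bound by spotting two specific pairs of elements forced to be doubly incident.

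First I would record the counterexample bound. Since $|\mathcal{S}|=n+3$ is odd (because $n$ is even) and no element lies in at least half of the sets, $d_i\le (n+2)/2$ for every $i\in[n]$. Double counting and using $S_0=[n]$ from Observation \ref{unimapobs} gives
\[
\sum_{i\in[n]}d_i \;=\; \sum_{A\in\mathcal{S}}|A| \;=\; n+\sum_{j\in[n]}|S_j|+|S_{n+1}|+|S_{n+2}|,
\]
so summing the per-element bound yields $\sum_{j\in[n]}|S_j|+|S_{n+1}|+|S_{n+2}|\le n^2/2$.

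Next I would lower bound $\sum_{j\in[n]}|S_j|$ by rewriting it as $\sum_{\{j,k\}}\bigl(\mathbf{1}_{j\in S_k}+\mathbf{1}_{k\in S_j}\bigr)$. The preceding lemma guarantees that each unordered pair $\{j,k\}\subseteq[n]$ contributes at least $1$, giving the baseline $\sum_j|S_j|\ge\binom{n}{2}$. The main obstacle and crux of the proof is improving this by $2$. Applying the lemma to $(S_1,S_{n+1})$: since $1\notin F_{n+1}$ forbids $1\in S_{n+1}$ and $1\notin F_1$ forbids $1\in S_1$, the only possible witness is $i=2$ with $2\in S_1$; symmetrically $1\in S_2$, so the pair $\{1,2\}$ contributes $2$ rather than $1$. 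The identical argument applied to $(S_3,S_{n+2})$ and $(S_4,S_{n+2})$ forces $4\in S_3$ and $3\in S_4$, so the pair $\{3,4\}$ also contributes $2$. Hence $\sum_{j\in[n]}|S_j|\ge\binom{n}{2}+2$.

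Combining the two estimates yields $|S_{n+1}|+|S_{n+2}|\le n^2/2-\binom{n}{2}-2=n/2-2$. Since $x$ is the smallest set size in $\mathcal{S}$, both $|S_{n+1}|\ge x$ and $|S_{n+2}|\ge x$; hence $2x\le n/2-2$, i.e.\ $n\ge 4x+4$, as claimed.
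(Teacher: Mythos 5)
Your proof is correct and follows essentially the same route as the paper: both upper bound $\sum_{A\in\mathcal{S}}|A|$ via failure of the abundance condition, lower bound $\sum_{j\in[n]}|S_j|$ by $\binom{n}{2}$ using the pairwise non-interference lemma, sharpen that bound by $2$ via the same forced memberships $2\in S_1$, $1\in S_2$, $4\in S_3$, $3\in S_4$, and finish using $|S_{n+1}|,|S_{n+2}|\ge x$. The only (cosmetic) difference is that you phrase the baseline count directly as a sum over unordered pairs, whereas the paper sums a per-element inequality and then divides by two to correct for double counting.
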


\begin{proof}
    Let $p$ be an arbitrary element in $[n]$. Because $[S_p, F_p]\cap[S_q, F_q]=\emptyset$
    for all $q\in[n]\setminus\{p\}$ by the non-interference condition, the sum of $|S_p|$ and the number of appearances of element $p$ in all other $S_q$ must be at least $n-1$. Therefore, after accounting for double counting, we have $\sum_{p\in[n]}|S_p|\geq\frac{n(n-1)}{2}$. Moreover, because $[S_1, F_1]\cap[S_{n+1}, F_{n+1}]=\emptyset$, we have $2\in S_1$. Because $[S_2, F_2]\cap[S_{n+1}, F_{n+1}]=\emptyset$, we have $1\in S_2$. Because either one of $2\in S_1$ and $1\in S_2$ is necessary and sufficient for $[S_1, F_1]\cap[S_2, F_2]=\emptyset$, the simultaneous existence of $2\in S_1$ and $1\in S_2$ is not accounted in the previous argument of $\sum_{p\in[n]}|S_p|\geq\frac{n(n-1)}{2}$. Similarly, from $[S_3, F_3]\cap[S_{n+2}, F_{n+2}]=\emptyset$ and $[S_4, F_4]\cap[S_{n+2}, F_{n+2}]=\emptyset$, we have  $4\in S_3$ and $3\in S_4$. Therefore, we have $\sum_{p\in[n]}|S_p|\geq\frac{n(n-1)}{2}+2$. To fail the abundance condition, $\sum_{k\in\{0,1,\ldots,n+2\}}|S_k|\leq n(\frac{n}{2}+1)$ as $|\mathcal{S}|=n+3$ and $n$ is even. Because $|S_0|=n$ by observation \ref{unimapobs} %AR{add internal link} 
    and $|S_{n+1}|\geq x$ and $|S_{n+2}|\geq x$, we have $\sum_{p\in[n]}|S_p|\leq n(\frac{n}{2}+1)-n-2x$. Solving for inequality $ n(\frac{n}{2}+1)-n-2x\geq\frac{n(n-1)}{2}+2$, we have $n\geq 4x+4$.
\end{proof}

%%worked till here

\begin{observation}
    If there is a counterexample with such form of filter and satisfy $n=4x+4$, then each element in $[n]$ belongs to exactly $\frac{n}{2}+1$ sets in $\mathcal{S}$.
\end{observation}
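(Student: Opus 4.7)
The plan is to show that when $n = 4x+4$, all the inequalities from the proof of Observation \ref{minsizeobs} must collapse to equalities, and then use a simple double-counting argument to conclude that each element has the same multiplicity in $\mathcal{S}$.

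First I would set up the upper bound. To fail the abundance condition, every element must appear in strictly fewer than $|\mathcal{S}|/2 = (n+3)/2$ sets; since $n$ is even and multiplicities are integers, this means at most $n/2 + 1$. Summing over $[n]$ gives
\[
\sum_{k=0}^{n+2}|S_k| \;\le\; n\!\left(\tfrac{n}{2}+1\right).
\]
Next I would reassemble the lower bound from Observation \ref{minsizeobs}: that proof shows $\sum_{p\in[n]}|S_p| \ge \frac{n(n-1)}{2}+2$, and we separately have $|S_0|=n$ (by Observation \ref{unimapobs}) and $|S_{n+1}|,|S_{n+2}|\ge x$, yielding
\[
\sum_{k=0}^{n+2}|S_k| \;\ge\; \tfrac{n(n-1)}{2}+2+n+2x.
\]

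Substituting $2x = (n-4)/2$ (from $n = 4x+4$) into the right-hand side simplifies it to $\frac{n^2+2n}{2} = n(\frac{n}{2}+1)$, exactly matching the upper bound. Equality therefore holds throughout, and in particular $\sum_{k=0}^{n+2}|S_k| = n(\frac{n}{2}+1)$.

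Finally, let $d_i$ denote the number of sets in $\mathcal{S}$ containing element $i$. Then $\sum_{i\in[n]} d_i = \sum_{k=0}^{n+2}|S_k| = n(\frac{n}{2}+1)$, while the failure of abundance forces $d_i \le \frac{n}{2}+1$ for every $i$. Since the $n$ values $d_i$ are each bounded above by $\frac{n}{2}+1$ and their sum equals $n(\frac{n}{2}+1)$, each $d_i$ must equal $\frac{n}{2}+1$ exactly, which is the desired conclusion. The argument is essentially bookkeeping; the only ``obstacle'' is carefully reassembling the inequalities from the previous observation, and making sure the integrality step that turns ``fewer than $(n+3)/2$'' into ``at most $n/2+1$'' is justified by the parity of $n$.
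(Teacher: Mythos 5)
Your proof is correct and takes essentially the same approach as the paper: the paper's one-line proof points to the fact that when $n=4x+4$ the inequality chain in Observation~\ref{minsizeobs} collapses to equalities, and your write-up simply spells out that collapse and the final pigeonhole/double-counting step in full detail.
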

\begin{proof}
    This is a direct result of observation \ref{minsizeobs}
    . In this case, the minimum and maximum value of $\sum_{p\in[n]}|S_p|$ is equal. Therefore, we are required to maximize $\sum_{p\in\{0,\ldots,n+2\}}|S_p|$ without satisfying the abundance condition.
\end{proof}
%% change this proof 

\section{Construction}\label{construction}
Counterexamples with such form of filter and satisfy $n=4x+4$ indeed exist. Raz offered a specific counterexample for $x=1$. This section offers a way to construct such counterexamples for any $x\geq 2$. The non-interference condition and the specific forms of $F_{n+1}$ and $F_{n+2}$ provide certain restrictions on $S_1$, $S_2$, $S_3$, and $S_4$. After constructing these sets, We then filled in the other sets with the help of the observations above. The final construction is given below. 
\\ \noindent $S_0=[n]$
\\ \noindent $S_1=\{2,\frac{n}{2}+1,\ldots,n\}$
\\ \noindent $S_2=\{1,3,\ldots,\frac{n}{2},n\}$
\\ \noindent $S_3=\{1,4,\ldots,\frac{n}{2}+2\}$
\\ \noindent $S_4=\{1,3,\frac{n}{2}+3,\ldots,n\}$
\\ \noindent $S_{k_1}=\{1,4,k_1+1,\ldots,\frac{n}{2}+k_1-2\}, 5\leq k_1 \leq \frac{n}{4}+2$
\\ \noindent $S_{k_2}=\{1,4,k_2+1,\ldots,\frac{n}{2}+k_2-3\}, \frac{n}{4}+3\leq k_2 \leq\frac{n}{2}$ 
\\ \noindent $S_{\frac{n}{2}+2}=\{2,4,\frac{n}{2}+3,\ldots,n-1\}$
\\ \noindent $S_{\frac{n}{2}+3}=\{2,3,\frac{n}{2}+4,\ldots,n\}$
\\ \noindent $S_{k_3}=\{2,3,5,\ldots,k_3-\frac{n}{2}+1,k_3+1,\ldots,n\},\frac{n}{2}+4\leq k_3 \leq\frac{3n}{4}$
\\ \noindent $S_{\frac{3n}{4}+1}=\{2,3,5,\ldots,\frac{n}{4}+3,\frac{3n}{4}+2,\ldots,n-1\}$
\\ \noindent $S_{k_4}=\{2,3,5,\ldots,k_4-\frac{n}{2}+2,k_4+1,\ldots,n\}, \frac{3n}{4}+2\leq k_4 \leq n-1$
\\ \noindent $S_n=\{3,5,\ldots,\frac{n}{2}+2,\frac{3n}{4}+1\}$
\\ \noindent $S_{n+1}=\{\frac{3n}{4}+2,\ldots,n\}$
\\ \noindent $S_{n+2}=\{1,5,\ldots,\frac{n}{4}+2\}$
\\ \noindent It can be checked by hand that the following hold. %\AR{referees will like you even more if you provide a link to a short script that will check all these things for them.}
\begin{enumerate}
    \item The minimum set size of $\mathcal{S}$ in this construction is $\frac{n}{4}-1$, which is achieved by $S_{n+1}$ and $S_{n+2}$ 
    \item Reimer's conditions are satisfied.
    \item Each element in $[n]$ appears in exactly $\frac{n}{2}+1$ member sets of $\mathcal{S}$.
\end{enumerate}
The verification of non-interference condition is included in the appendix.

\section{Properties}
The family $\mathcal{S}$ created under the above construction is not union-closed. In other words, $\mathcal{S}\neq cl(\mathcal{S})$. When $x=2$, $\frac{|S|}{|cl(\mathcal{S})|}=\frac{15}{133}\approx0.1128$. When $x=3$, $\frac{|S|}{|cl(\mathcal{S})|}=\frac{19}{233}\approx0.0815$. When $x=4$, $\frac{|S|}{|cl(\mathcal{S})|}=\frac{23}{354}\approx0.0650$. Unless explicitly specified, the discussion in this section only applies to the type of counterexamples constructed in the previous section.
\begin{theorem}
If $\mathcal{S}$ is a family satisfying the construction given in Section \ref{construction}, then $|cl(\mathcal{S})|=\Theta(n^2)$. 
\end{theorem}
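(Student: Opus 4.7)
The plan is to establish both $|cl(\mathcal{S})| = \Omega(n^2)$ and $|cl(\mathcal{S})| = O(n^2)$ separately.

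For the lower bound, I will exhibit $\Omega(n^2)$ explicit distinct unions by pairing the sets indexed by $k_1$. For each pair $5 \le k_1 < k_1' \le n/4 + 2$, the two intervals $[k_1+1, n/2+k_1-2]$ and $[k_1'+1, n/2+k_1'-2]$ overlap (their shift $k_1' - k_1 \le n/4 - 3$ is strictly less than the interval length $n/2 - 3$), so $S_{k_1} \cup S_{k_1'} = \{1, 4\} \cup [k_1+1, n/2+k_1'-2]$. Distinct pairs produce distinct unions, since the pair is recoverable from the min and max of the interval portion, yielding $\binom{n/4-2}{2} = \Omega(n^2)$ distinct members of $cl(\mathcal{S})$.

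For the upper bound, I will classify each $T \in cl(\mathcal{S})$ by the shape of its restriction $T \cap [5, n]$, noting that $T \cap \{1, 2, 3, 4\}$ takes only $O(1)$ values. Examining each member $S \cap [5, n]$, it is a union of at most two intervals plus possibly an outlier in $\{n, 3n/4+1\}$. The sets $S_{k_1}, S_{k_2}$ contribute a ``middle'' interval $[a, b]$ with $a \ge 6$ and $b \le n - 3$; the sets $S_{k_3}, S_{k_4}$ together with several special sets produce left intervals $[5, \alpha]$ and right intervals $[\beta, n]$; the remaining sets contribute intervals consistent with the left/middle/right pattern. Hence every $T \cap [5, n]$ has the form $[5, \alpha] \cup [a, b] \cup [\beta, n]$ (any piece possibly empty), plus at most two outliers.

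A naive parameterization by $(\alpha, a, b, \beta)$ gives $O(n^4)$ shapes; the plan is to reduce this to $O(n^2)$ using the key constraint that any non-empty middle contribution satisfies $b - a \ge n/2 - 4$ (since each $S_{k_1}$ or $S_{k_2}$ has interval length at least $n/2 - 3$, and overlapping unions remain a single long interval). I will then split into cases by how many adjacent interval pairs merge. If both pairs merge, $T \cap [5, n] = [5, n]$ and so $T = [n]$ (one option). If exactly one pair merges, $T \cap [5, n]$ equals $[5, b] \cup [\beta, n]$ or $[5, \alpha] \cup [a, n]$, each determined by only two parameters ($O(n^2)$ options). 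If no pair merges, combining $b - a \ge n/2 - 4$ with the linkage $\beta - \alpha \le n/2$ (which holds whenever the flanks come from a single sub-family such as the $S_{k_3}$'s alone) and the separation requirements $\alpha \le a - 2$ and $b \le \beta - 2$ forces these inequalities to near-equalities, pinning the four parameters to at most $O(n)$ configurations.

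The main obstacle will be the ``no merging'' case: a careful sub-case analysis based on which of $\{S_{k_1}\}, \{S_{k_2}\}, \{S_{k_3}\}, \{S_{k_4}\}$ and which special sets contribute to the union is needed to verify that the structural constraints really do tie $(\alpha, a, b, \beta)$ together tightly, rather than leaving two or more degrees of freedom and yielding more than $O(n^2)$ configurations overall.
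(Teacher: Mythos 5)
Your lower bound is a concrete and correct version of the paper's assertion that $|cl(\mathcal{S}_{k_1})|=\Omega(n^2)$. Your upper bound is the same in spirit as the paper's: classify unions by an interval pattern and show that the two inequalities $b-a\ge n/2-4$ (middle intervals are long) and $\beta-\alpha\le n/2$ (the flank pairs never leave a gap wider than $n/2$) admit only $O(n^2)$ configurations. The case split you sketch (no merge gives $O(n)$ once the inequalities bind, one merge gives $O(n^2)$, both merges give $O(1)$) is consistent with what the paper does with its superfamily $\mathcal{C}$.

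The genuine gap --- which you acknowledge yourself --- is that you never establish that the interval pattern and the two inequalities actually hold across all of $cl(\mathcal{S})$. Several members of $\mathcal{S}$ do not fit the clean left/middle/right mold: $S_{n/2+2}$ and $S_{3n/4+1}$ have intervals ending at $n-1$ rather than $n$, $S_n$ contributes an isolated point $3n/4+1$, $S_2$ contributes an isolated $n$, and so on. Unions involving these sets can perturb the flank endpoints or create outliers interacting with $\alpha$ and $\beta$, and chasing every combination is exactly the ``careful sub-case analysis'' you defer. The paper sidesteps all of this with a reduction you are missing: apart from the four parametrized families $\mathcal{S}_{k_1},\ldots,\mathcal{S}_{k_4}$ there are only $11$ other sets in $\mathcal{S}$, so $|cl(\mathcal{S})|\le 2^{11}\cdot\bigl(|cl(\mathcal{S}_{k_1}\cup\cdots\cup\mathcal{S}_{k_4})|+1\bigr)$, and it suffices to bound $|cl(\mathcal{S}_{k_1}\cup\cdots\cup\mathcal{S}_{k_4})|$ by $O(n^2)$. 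Restricted to these four families the interval pattern is exact, the two inequalities are preserved under union, and your parameter-counting closes with no special cases. Adopt that reduction and the rest of your argument goes through.
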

\begin{proof}
First we show $|cl(\mathcal{S})|=\mathcal{O}(n^2)$. Define four families $\mathcal{S}_{k_1}$, $\mathcal{S}_{k_2}$, $\mathcal{S}_{k_3}$, $\mathcal{S}_{k_4}$ as
\begin{itemize}
    \item[$\mathcal{S}_{k_1}$]=\{$S_{k_1}$:$5\leq k_1 \leq \frac{n}{4}+2$\}
    \item[$\mathcal{S}_{k_2}$]=\{$S_{k_2}$:$\frac{n}{4}+3\leq k_2 \leq\frac{n}{2}$\}
    \item[$\mathcal{S}_{k_3}$]=\{$S_{k_3}$:$\frac{n}{2}+4\leq k_3 \leq\frac{3n}{4}$\}
    \item[$\mathcal{S}_{k_4}$]=\{$S_{k_4}$:$\frac{3n}{4}+2\leq k_4 \leq n-1$\}
\end{itemize}
Note that there are only 11 other sets in $\mathcal{S}$. Therefore, proving 
$|cl(\mathcal{S}_{k_1}\cup\mathcal{S}_{k_2}\cup\mathcal{S}_{k_3}\cup\mathcal{S}_{k_4})|=\mathcal{O}(n^2)$ is sufficient to show $|cl(\mathcal{S})|=\mathcal{O}(n^2)$. To begin with, we have
\\$cl(\mathcal{S}_{k_{1}})=\{\{1,4,k_{1a},\ldots,k_{1b}\}: 6\leq k_{1a}\leq \frac{n}{4}+3, \frac{n}{2}+3\leq k_{1b}\leq \frac{3n}{4},k_{1b}-k_{1a}\geq\frac{n}{2}-3\}$
\\ \noindent$cl(\mathcal{S}_{k_{2}})=\{\{1,4,k_{2a},\ldots,k_{2b}\}: \frac{n}{4}+4\leq k_{2a}\leq \frac{n}{2}+1, \frac{3n}{4}\leq k_{2b}\leq n-3,k_{2b}-k_{2a}\geq\frac{n}{2}-4\}$
\\ \noindent$cl(\mathcal{S}_{k_{3}})=\{\{2,3,5,\ldots,k_{3a},k_{3b},\ldots,n\}: 5\leq k_{3a}\leq \frac{n}{4}+1, \frac{n}{2}+5\leq k_{3b}\leq \frac{3n}{4}+1,k_{3b}-k_{3a}\leq\frac{n}{2}\}$
\\ \noindent$cl(\mathcal{S}_{k_{4}})=\{\{2,3,5,\ldots,k_{4a},k_{4b},\ldots,n\}: \frac{n}{4}+4\leq k_{4a}\leq \frac{n}{2}+1, \frac{3n}{4}+3\leq k_{4b}\leq n ,k_{4b}-k_{4a}\leq\frac{n}{2}-1\}$
\\ \noindent Because $cl(\mathcal{S}_{k_{1}}\cup\mathcal{S}_{k_{2}})=cl(cl(\mathcal{S}_{k_{1}})\cup cl(\mathcal{S}_{k_{2}}))$, we have \\ \noindent $cl(\mathcal{S}_{k_{1}}\cup\mathcal{S}_{k_{2}})\subset\{\{1,4,k_{5a},\ldots,k_{5b}\}: 6\leq k_{5a}\leq \frac{n}{2}+1, \frac{n}{2}+3\leq k_{5b}\leq n-3,k_{5b}-k_{5a}\geq\frac{n}{2}-4\}$
Similarly, we have \\ \noindent $cl(\mathcal{S}_{k_{3}}\cup\mathcal{S}_{k_{4}})\subset\{\{2,3,5,\ldots,k_{6a},k_{6b},\ldots,n\}: 5\leq k_{6a}\leq \frac{n}{2}+1, \frac{n}{2}+5\leq k_{6b}\leq n ,k_{6b}-k_{6a}\leq\frac{n}{2}\}$
Then
\\ \noindent $cl(\mathcal{S}_{k_1}\cup\mathcal{S}_{k_2}\cup\mathcal{S}_{k_3}\cup\mathcal{S}_{k_4})\subset\mathcal{C}=\{\{1,\ldots,k_{8a},k_{7a},\ldots,k_{7b},k_{8b},\ldots,n\}:5\leq k_{8a}< k_{7a}\leq \frac{n}{2}+1,\frac{n}{2}+3\leq k_{7b}<k_{8b}\leq n,k_{7b}-k_{7a}\geq\frac{n}{2}-4,k_{8b}-k_{8a}\leq\frac{n}{2}\}$
\\ \noindent From the inequalities constraints, we have $k_{7b}\geq k_{7a}+\frac{n}{2}-4$ and $k_{8b}\leq k_{8a}+\frac{n}{2}< k_{7a}+\frac{n}{2}$. Therefore, the sets in $\mathcal{C}$ can be partitioned to two different types. The first type contains the sets where there is no gap between $k_{7b}$ and $k_{8b}$; that is, they are of the form $\{1,\ldots,k_{8a},k_{7a},\ldots,n\}$. The number of these sets is $\mathcal{O}(n^2)$, as each of $k_{8a}$ and $k_{7a}$ have linear in $n$ choices. 
For the sets where the gap between $k_{7b}$ and $k_{8b}$ does exist, for any fixed $k_{7a}$, there are at most 6 choices for the pair $(k_{7b},k_{8b})$ due to the two inequalities above. Therefore, the number of sets that belong to the second type is also $\mathcal{O}(n^2)$. Thus $|\mathcal{C}|=\mathcal{O}(n^2)$.
From the description of $cl(\mathcal{S}_{k_1})$ above, we can obtain that $cl(\mathcal{S}_{k_1}) = \Omega(n^2)$ and thus $cl(\mathcal{S}) = \Theta(n^2)$. 
\end{proof}

\begin{corollary}
   $\lim_{x\to\infty}\frac{|\mathcal{S}|}{|cl(\mathcal{S})|}=0$.
\end{corollary}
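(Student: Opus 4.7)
The plan is to combine the theorem just established with a direct count of $|\mathcal{S}|$. Reading off the construction in Section \ref{construction}, the family $\mathcal{S}$ consists of the sets $S_0, S_1, \ldots, S_n, S_{n+1}, S_{n+2}$, so $|\mathcal{S}| = n+3$. Since the construction is parameterized by $x$ through the relation $n = 4x+4$ (the equality case of Observation \ref{minsizeobs}), we get $|\mathcal{S}| = 4x+7$, which is $\Theta(x)$.

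Next I would invoke the preceding theorem, which gives $|cl(\mathcal{S})| = \Theta(n^2) = \Theta(x^2)$. Putting the two estimates together yields
\[
\frac{|\mathcal{S}|}{|cl(\mathcal{S})|} \;=\; \frac{\Theta(x)}{\Theta(x^2)} \;=\; \Theta\!\left(\tfrac{1}{x}\right),
\]
which tends to $0$ as $x \to \infty$.

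There is essentially no obstacle here; the only thing to be mildly careful about is making the constants implicit in $\Theta$ explicit enough to ensure the limit really is zero. In particular, one should note the upper bound $|\mathcal{S}| \leq 4x+7$ and pair it with the lower bound $|cl(\mathcal{S})| \geq |cl(\mathcal{S}_{k_1})| = \Omega(x^2)$ already extracted from the proof of the theorem, so that the ratio is genuinely bounded above by a function of the form $C/x$ for an explicit constant $C$. The corollary is then immediate.
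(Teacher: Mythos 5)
Your proposal is correct and matches the paper's argument: both deduce the limit from $|\mathcal{S}|=\Theta(n)$ (indeed $|\mathcal{S}|=n+3=4x+7$) together with the preceding theorem's $|cl(\mathcal{S})|=\Theta(n^2)$. You simply spell out the constants a bit more explicitly than the paper does.
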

\begin{proof}
    This follows directly from $|\mathcal{S}|=\Theta(n)$ and $|cl(\mathcal{S})|=\Theta(n^2)$.
\end{proof}

\section{Conjectures}
\begin{conjecture}
For $x\geq5$, $|cl(\mathcal{S})|=\frac{23}{32}n^2+\frac{35}{8}n-21$.
\end{conjecture}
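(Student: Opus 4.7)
The plan is to refine the argument of Theorem 1 from an asymptotic bound to an exact enumeration. My first step is to partition $cl(\mathcal{S})$ according to which of the ``signature'' pairs $\{1,4\}$ and $\{2,3\}$ a given closure element contains. Since every member of $\mathcal{S}_{k_1}\cup\mathcal{S}_{k_2}$ contains $\{1,4\}$ but misses $2$ and $3$, and every member of $\mathcal{S}_{k_3}\cup\mathcal{S}_{k_4}$ contains $\{2,3\}$ but misses $1$ and $4$, unions within either block preserve the signature, so $cl(\mathcal{S}_{k_1}\cup\mathcal{S}_{k_2})$ and $cl(\mathcal{S}_{k_3}\cup\mathcal{S}_{k_4})$ are disjoint. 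Cross-block unions, and unions that involve any of the eleven remaining members $S_0,\ldots,S_4,S_{n/2+2},S_{n/2+3},S_{3n/4+1},S_n,S_{n+1},S_{n+2}$, will have to be handled separately.

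Next I would upgrade the inclusions $cl(\mathcal{S}_{k_i})\subset\{\cdots\}$ stated in the proof of Theorem 1 to equalities, checking that every parameter pair in the displayed ranges is actually realized as a union of original members and that distinct parameter pairs yield distinct sets. Combining the two $\{1,4\}$-blocks, $cl(\mathcal{S}_{k_1}\cup\mathcal{S}_{k_2})$ is then in bijection with an explicit planar region of integer pairs $(k_{5a},k_{5b})$ and a direct double sum yields a closed-form quadratic in $n$. A symmetric computation handles $cl(\mathcal{S}_{k_3}\cup\mathcal{S}_{k_4})$, and together these two counts contribute the bulk of the $\frac{23}{32}n^{2}$ leading term.

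I would then incorporate the eleven special members and all unions involving them. Any set in $cl(\mathcal{S})$ containing $\{1,2,3,4\}$ arises either from a cross-block union or from a union involving one of $S_0,\ldots,S_4$; I expect these to reduce to $[n]$ together with a short, $\mathcal{O}(n)$-length list of near-total sets. Unions of a single-block closure element with $S_{n+1}$ or $S_{n+2}$ should produce an additional linear-in-$n$ correction by shifting the endpoints of the relevant interval. Enumerating the special members that are not already in either block closure, adding the new sets created by unions among the specials, and removing coincidences by inclusion-exclusion should produce the remaining $\frac{35}{8}n-21$.

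The main obstacle will be the exact bookkeeping: ensuring no set is counted twice when one of the special $S_i$ happens to coincide with a closure element of a $k_j$-family, and verifying the cross-block union list is exactly what I expect. The hypothesis $x\geq 5$ is almost certainly there to rule out low-$n$ collisions; indeed the reported value $|cl(\mathcal{S})|=133$ at $x=2$ differs from the formula's prediction $135$, which reflects exactly such a coincidence, while the formula already matches the data at $x=3$ and $x=4$. For $x\geq 5$ I would argue by case analysis on the ranges of $k_{1a},\ldots,k_{4b}$ that the parameter regions are wide enough for no low-order collision to occur. Once that is established, summing the contributions and simplifying with $n=4x+4$ should yield exactly $\frac{23}{32}n^{2}+\frac{35}{8}n-21$.
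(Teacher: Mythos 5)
The paper does not prove this statement; it is posed as a conjecture and verified only computationally for $x = 5, \ldots, 12$ via the linked scripts, so there is no proof here to compare against. Your proposal is a plausible attack plan but is not itself a proof: every step that would actually produce the coefficients $\frac{23}{32}$, $\frac{35}{8}$, and $-21$ is deferred. You do not verify that the containments $cl(\mathcal{S}_{k_i}) \subset \{\cdots\}$ used in the proof of Theorem~1 are in fact equalities, you do not carry out the double sums over the parameter regions, and you do not enumerate the cross-block unions or the sets generated by unions involving the eleven remaining members of $\mathcal{S}$---you only say you \emph{expect} these to form a short list. The paragraph you label the ``main obstacle'' is, in fact, the entire content of the conjecture; a proof has to do that bookkeeping.

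That said, your diagnosis of the $x \geq 5$ hypothesis is a genuine observation the paper does not make explicit: the formula predicts $135$ at $x = 2$ while the paper reports $|cl(\mathcal{S})| = 133$, a discrepancy of $2$ consistent with a low-$n$ collision, whereas the formula already agrees with the reported values $233$ at $x = 3$ and $354$ at $x = 4$. This both suggests the stated threshold may be conservative and pinpoints exactly what a complete proof must rule out past it. If you carry through the enumeration you sketch---upgrading the inclusions to equalities, summing over the integer regions, and showing no collisions persist for $x \geq 5$---you would upgrade the paper's conjecture to a theorem, which would be a real contribution rather than a re-derivation.
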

\noindent This conjecture has been verified for $x=5,6,\ldots,12$. Links to the scripts used in this verificaiton can be found in the appendix. 

\newpage
\section{Appendix}
There are two codes linked below. \href{https://1drv.ms/u/c/ecb46a0d19fbe4ee/EZX3hGbeXaRJvJXeveP9JYEBf3kLJcPtKVBGuBy6Aukpaw?e=Bk9aZD}{generator.cpp} generates families and filters with the construction given in this paper. The program asks for $x$, the minimum set size, as input. The program outputs a file where each row corresponds to a set. The set elements are represented in binary. For example, when $n=8$, $\{1,3,4,7,8\}$ will be represented as ``1 0 1 1 0 0 1 1" and $\{2,4\}$ will be represented as ``0 1 0 1 0 0 0 0". When a row ends with ``S", the set is in the family. The set immediately after it, ending with ``F", is its correspond set in the filter. The sets are in the same order as seen in the construction section previously.
\\ \href{https://1drv.ms/u/c/ecb46a0d19fbe4ee/EYzgZrJwlCdBppdLFCrLY-sBvYlWfbIerNS36Ni5ZSUoFQ?e=LF1ces}{uclosure.cpp} accepts an input file in the same format as the output file from ``generator.cpp"; that is, families and filters are represented in binary, and each set in the family is followed immediately by its corresponding set in the filter. The program then checks if the non-interference condition is violated, and if there is any element satisfying the abundance condition. The program also generates the closure of the family. It counts the size of the closure and the numbers of sets in the closure of each specific size.

\end{document}